\newtheorem{theorem}{Theorem}[section]
\newtheorem{proposition}[theorem]{Proposition}
\newtheorem{corollary}[theorem]{Corollary}
\newenvironment{proof}[1][]{\begin{trivlist}
		\item[\hskip \labelsep {\bfseries #1}]}{\end{trivlist}}
\newcommand{\qed}{\nobreak \ifvmode \relax \else
	\ifdim\lastskip<1.5em \hskip-\lastskip
	\hskip1.5em plus0em minus0.5em \fi \nobreak
	\vrule height0.75em width0.5em depth0.25em\fi}
\begin{document}
	\title{Game-theoretical model of cooperation between producers in a production process: 3-agent interaction case}
	\author[1]{O.~A. Malafeyev\thanks{malafeyevoa@mail.ru}}
	\author[1]{A.~P. Parfenov\thanks{parf@bk.ru}}
	\affil[1]{Saint-Petersburg State University,  Russia}
	\date{}
	\maketitle
	\begin{abstract}
		A network model of manufacturing system is considered. This is a network formation game where players are participants of a production process and their actions are their's requests for interaction. Production networks are formed as a result of an interaction. Players' payoff functions are defined on the set of all possible networks. In this paper the special case of network formation games is considered. Payoff functions are supposed to be additive and depend on subsets of arcs. Two cases are considered. First, subsets of arcs are supposed to be not intersected. The necessary and sufficient conditions for equilibrium are given for this case. The second case is the one where subsets of arcs are determined by 3-agent coalitions. An illustrative example is given where equilibria and a compromise solution are found.
	\end{abstract}
	
	\textbf{Keywords:} Game theory; network games; network formation games; coalitions; 3-agent coalitions; supply chains; Nash equilibrium; algorithms.
	
	\textbf{Mathematics Subject Classification (2010):} 91-08, 91A23, 49K99.

	\section{Introduction}
	
In production processes a problem of partner selection often arises. The problems of this kind are studied in the mathematical theory of network games. Network games are the games where each strategy profile forms a network (i.e. graph with weighted nodes and arcs). Network games of various types are considered in \cite{Petros1, GrigorievaPostman, GrigorievaPostmanRus, NetGamesJackson, NetGames, MalafeevCorruptionNet, Petros2}. They can be applied to supply chain modelling \cite{SupplyChainNetGame, SupplyChainHierGame}.

Network formation games \cite{NetGamesGubko, NetGamesJackson, NetGames} are games where strategy of a player is a set of offers to other players for interaction. In such game each strategy profile forms a network where a set of vertices is a set of players, and arcs are pairs of players interacting with each other.

Similar models have been intensively studied in recent years \cite{NetGames, Petros1}. A model with asymmetric offers was also considered \cite{NetGamesGubko}. In such game strategy profile forms a directed graph. As a rule, a payoff function on a set of graphs is an arbitrary real-valued function. A set of Nash equilibrium points for a network formation game is always non-empty. However the problem of finding equilibria may be hard enough because one needs to check a large number of strategies. This number grows faster than exponent of a number of players \cite{_NetGamesFormation}.

For some classes of network games, rather simple necessary and sufficient conditions for the strategy profiles to be equilibria one can find in \cite{NetGames}. Thus it is a good idea to study particular cases of payoff functions. For example, in \cite{NetGamesSosnina3} additive payoff functions defined by 3-agent coalitions are considered.  

In this paper particular cases of network formation games are studied. Results of this study can be used to model manufacturing systems. A manufacturing system with finite number of agents is considered. This system can produce, transport or store some products. A tuple of agent's product flows is called stable if each agent can`t increase his profit by changing its own flows. So stable flows are Nash equilibrium points in the network game.

Network games with additive value functions for the case of coalitions with limited number of players are considered as well. Necessary and sufficient conditions for strategy profile to be Nash equilibria are found when coalitions do not intersect. Network games with 3-agent coalitions are analyzed afterwards. An example is given. For this example Nash equilibria and a compromise solution are calculated.

Papers related to the theme of this article are \cite{b1}--\cite{52}. 
	
\section{Additive payoff functions determined by network subgraphs}

Each agent in a production system gets profits and pays costs from production chains that he uses. Net profit of an agent is a sum of  his profits and losses. 

Let
$$
S' \subseteq 2^M
$$
be a set of possible network subgraphs. Each subgraph is determined by a set of arcs of network. Let's denote by $S'(g)$ a set of subgraphs from $S'$ that are subgraphs of $g$: 
$$
S'(g) = S' \cap 2^{M(g)} = \{M' \in S' \mid M' \subseteq M(g)\}.
$$
sufficient conditions for a network to be stable are satisfied both for the case of pairwise intersecting and for the case of disjoint subgraphs

Let $D(S)$ be an income from agents' interaction determined by subgraph $S \in S'(g)$. Each agent $x$ gets a part $\alpha_{x}(S) \geq 0$ from this income. Thus
$$
\sum_{x \in S} \alpha_x(S)=1.
$$

Let $H_{x}(\varphi)$ be a total profit of agent $x$ in action profile $\varphi$. It is a sum of profits of agent $x$ in all subgraphs in action profile $\varphi$. Let
$$
S_{\varphi}(x)= \{S \in S'(g(\varphi)) \mid x \in S\}
$$
be a set of all subgraphs in action profile $\varphi$ that contain the agent $x$. Then the total profit of the agent $x$ is as follows
$$
H_x(\phi) = \sum_{S \in S_{\varphi}(x)}\alpha_{x}(S)D(S).
$$

Thus we have described the game
$$
G_{S'} = \{ N,\Phi_G, \{D(S)\}_{S \in S'}, \{\alpha_x(S)\}_{S \in S', \ x \in S}) \}.
$$

\begin{proposition}
	\label{UtvStability}
	Suppose any 2 subgraphs from $S'$ do not intersect. Suppose the network $g$ does not have loops. Then the network $g$ is stable iff for all its subgraphs $S \in S'(g) \ D(S) \geq 0$.
\end{proposition}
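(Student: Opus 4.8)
The plan is to read ``stable'' as ``Nash equilibrium'' in $G_{S'}$: the network $g$ is stable when no agent can strictly increase his total profit $H_x$ by unilaterally changing his own requests. The whole argument rests on one structural observation about how a deviation acts on the set of present subgraphs. Fix the canonical profile $\varphi$ realizing $g$, in which every agent requests exactly his neighbours in $g$, so there are no unreciprocated requests. Since there are no loops, every arc joins two distinct agents and is present only when both endpoints request each other; an agent $x$ who deviates can therefore switch off any arc incident to him (by dropping a request) but cannot switch on a new arc (offering a non-neighbour $w$ does nothing, as $w$ does not request $x$ back, and there is no self-loop for $x$ to create alone). Hence any deviation of $x$ turns $g$ into a graph $g'$ obtained by deleting arcs incident to $x$ only, so $M(g')\subseteq M(g)$ and therefore $S'(g')\subseteq S'(g)$ and $S_{\varphi'}(x)\subseteq S_{\varphi}(x)$: a unilateral move can only destroy present subgraphs, never create new ones.

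For the direction ``$g$ stable $\Rightarrow D(S)\ge 0$'' I would argue by contraposition. Suppose $D(S_0)<0$ for some $S_0\in S'(g)$. Since $\sum_{x\in S_0}\alpha_x(S_0)=1$, some vertex $x$ of $S_0$ has $\alpha_x(S_0)>0$. As $x$ is a vertex of $S_0$, it is incident to at least one arc $e\in S_0$; let $x$ drop the request realizing $e$. By the pairwise disjointness of the members of $S'$, the arc $e$ lies in no other member of $S'$, so deleting $e$ removes $S_0$ from $S'(g)$ and leaves every other present subgraph untouched. Consequently $H_x$ changes by exactly $-\alpha_x(S_0)D(S_0)>0$, so $x$ profits from the deviation and $g$ is not stable, the desired contradiction.

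For the converse ``$D(S)\ge 0$ for all $S\in S'(g)\Rightarrow g$ stable'' I would use the structural observation directly. Take any agent $x$ and any deviation $\varphi\to\varphi'$; by the observation $S_{\varphi'}(x)\subseteq S_{\varphi}(x)$, so the change in profit is $H_x(\varphi')-H_x(\varphi)=-\sum_{S\in S_{\varphi}(x)\setminus S_{\varphi'}(x)}\alpha_x(S)D(S)$. Every subgraph removed here belongs to $S'(g)$ and thus satisfies $D(S)\ge 0$, while $\alpha_x(S)\ge 0$; hence the sum is nonnegative and the difference is $\le 0$. No unilateral deviation strictly increases any agent's profit, so $g$ is stable.

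I expect the main obstacle to be the structural observation itself, namely that a single agent's deviation can only shrink the set of present subgraphs and never manufacture a new (possibly profitable) one. This is exactly the place where all three hypotheses are consumed: the pairwise ``interacting with each other'' formation rule rules out creating arcs unilaterally, the absence of loops forbids single-agent subgraphs that one player could toggle on at will, and the pairwise disjointness of $S'$ guarantees that deleting a single arc affects only the one subgraph that contains it. Once this monotonicity of $S'(g)$ under unilateral deviations is established, both implications reduce to the one-line sign computations above.
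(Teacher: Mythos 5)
Your proof is correct and takes essentially the same route as the paper's: necessity by letting a player $x$ with $\alpha_x(S_0)>0$ sever one link of the negative-income subgraph, with pairwise disjointness guaranteeing no other subgraph is destroyed, and sufficiency by noting that a unilateral deviation can only shrink $S_{\varphi}(x)$, so with all $D(S)\ge 0$ no deviation pays. The only difference is presentational: you spell out the structural observation (mutual consent, canonical profile, no loops imply deviations only delete arcs) that the paper's proof simply asserts.
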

\begin{proof}
	Necessity. Let's prove the proposition by contradiction. Suppose that there exists a subgraph $S$ such that $D(S) < 0$. For players in $S$  we have $\sum_{x \in S} \alpha_x(S)=1$. Thus there exists a player $x \in S$ such that $\alpha_x(S) > 0$. If $x$ breaks link with another player $y$ in $S$ then $S$ disappears from $S'(g)$. Other subgraphs still exist in $g$, because there is no pairwise intersections of subgraphs. Thus payoff of the player $x$ increases by $-\alpha_x(S)D(S)$. But the network is stable. This is a contradiction.
	
	Now let's prove the sufficiency. Let $x$ be a player. If $x$ breaks any links in action profile $\varphi$, a set of subgraphs in $S_{\varphi}(x)$ decreases. Thus the player $x$ does not increase its payoff (because for any subgraph $S \ D(S) \geq 0$).
\end{proof}
\begin{corollary}
	Suppose a payoff function of each player $x$ is additive and is determined by 2-player coalitions (i.e. only by links of $x$):
	$$
	H_x(g) = \sum_{(x, y) \in M(g)}\alpha_{(x, y)}(x)D(x, y) + \sum_{(y, x) \in M(g)}\alpha_{(y, x)}(x)D(y, x).
	$$
	
	Then the network $g$ is stable iff for all $x, y \ D(x, y) \geq 0$.
\end{corollary}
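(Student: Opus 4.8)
The plan is to derive the corollary as the special case of Proposition~\ref{UtvStability} in which every subgraph consists of a single arc. Concretely, I would take $S'$ to be the family of all one-arc subgraphs, $S' = \{\{(x,y)\} : x \ne y\}$, so that a ``coalition'' here is nothing but an ordered pair of distinct players joined by a directed link, the income attached to it is $D(\{(x,y)\}) = D(x,y)$, and the shares are $\alpha_x(\{(x,y)\}) = \alpha_{(x,y)}(x)$. Phrasing the two-player case as an instance of the game $G_{S'}$ is the whole content of the reduction, so the first task is to set up this dictionary between links and singleton subgraphs precisely.

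Next I would verify that this $S'$ meets the two hypotheses of the proposition. For disjointness one must read ``do not intersect'' as ``share no arc'' (subgraphs are sets of arcs, and in the necessity argument breaking one arc must destroy only the subgraph it belongs to); two distinct singletons $\{(x,y)\}$ and $\{(x',y')\}$ plainly share no arc, so the non-intersection assumption holds automatically, \emph{even when the two arcs share an endpoint}. Since a link always joins two distinct players, $g$ carries no loops, so the loop-free hypothesis holds as well. I would then reconcile the payoffs: the set $S_{\varphi}(x)$ of subgraphs containing $x$ splits into the arcs leaving $x$ and the arcs entering $x$, and substituting the dictionary above turns $\sum_{S \in S_{\varphi}(x)}\alpha_x(S)D(S)$ into exactly the two-sum expression displayed in the corollary, which confirms that the present setting really is $G_{S'}$ with non-intersecting subgraphs.

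With both hypotheses checked, Proposition~\ref{UtvStability} applies verbatim: $g$ is stable iff $D(S)\ge 0$ for every $S \in S'(g)$. Because $S'(g)$ is precisely the collection of singletons $\{(x,y)\}$ with $(x,y)\in M(g)$, this reads $D(x,y)\ge 0$ for every arc $(x,y)$ of $g$, which is the asserted condition (so the quantifier ``for all $x,y$'' is to be understood as ranging over the arcs actually present in $g$, i.e.\ over the members of $S'(g)$). I do not expect a genuine obstacle beyond this bookkeeping; the only point that deserves care is making the correspondence between ``one-arc subgraph'' and ``link'' explicit enough that the disjointness and loop-free hypotheses of the proposition are visibly satisfied, since the whole proof rests on invoking the proposition in this reduced form.
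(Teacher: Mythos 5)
Your proof is correct and takes essentially the same route as the paper: the paper's own proof is the one-line observation that the subgraphs attached to 2-agent coalitions pairwise do not intersect, so Proposition~\ref{UtvStability} applies directly. Your version merely spells out the bookkeeping the paper leaves implicit (the dictionary between links and one-arc subgraphs, the loop-free hypothesis, and the reading of ``for all $x,y$'' as ranging over arcs of $g$), which is a faithful elaboration rather than a different argument.
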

\begin{proof}
	The corollary is true because subgraps of 2-agent coalitions do not pairwise intersect.
\end{proof}

Sufficient conditions for a network to be stable are satisfied both for the case of pairwise intersecting and for the case of disjoint subgraphs.

\section{Additional payoff function determined by 2-agent and 3-agent subgraphs}

Profit of agent $j$ depends on its contracts with each agent $i \in N$. In addition, profit may depend on other factors --- for example, on contracts with both agent $i$ and $k$ simultaneously.

Examples:
\begin{enumerate}
	\item agent $i$ and agent $k$ supply two kinds of components; agent $j$ makes final production using these components;
	
	\item agent $j$ moves products from agent $i$ to agent $k$;
	
	\item agent $j$ is a mediator: agent $j$ buys products from agent $i$ and sells it to agent $k$.
\end{enumerate}

Thus coalitions with size greater than 2 must be considered. Agents' profit may depend on coalitions of such kind.

Earlier games with payoffs depending on 3-player coalitions were considered in \cite{NetGamesSosnina3}. However often incomes and costs depend both on 2-player and 3-player coalitions.

Coalition $S=(i_1, i_2, i_3) \subset N$ arises if each agent $i \in S$ wishes to cooperate with all other agents $j \in S \setminus \{i\}$ and agrees with offers from all other agents $j \in S \setminus \{i\}$. The player $i$'s payoff depends on two kinds of subgraphs of the network: subgraphs determined by 2-player coalitions
$$
S_2 = \{(i, j), (j, i)\}_{i,j \in N}
$$
and by 3-player coalitions
$$
S_3 = \{(i, j), (j, i), (i, k), (k, i), (j, k), (k, j)\}_{i,j,k \in N}.
$$

Coalitions with 2 and 3 players in action profile $\varphi$ are enumerated as follows:
$$
S_2(g(\varphi)) \cup S_3(g(\varphi)) = \{ S_{\varphi}^{1}, S_{\varphi}^{2}, \ldots, S_{\varphi}^{n_\phi}\}.
$$

Proposition \ref{UtvStability} on network stability condition may not hold true because 3-player coalitions may be pairwise intersected.

{\bf Example}. The case of 5 agents that can form any links. Their weights are as follows $\alpha_i(S) = 1/3, \ i = 1, \dots, 5$. Let coalitions' payoffs be:
$$
\begin{array}{l}
D(1, 2, 3) = 2\\
D(1, 3, 4) = -1\\
D(1, 4, 5) = 2\\
D(3, 4, 5) = 2
\end{array}
$$
and $D(S) = 0$ for other coalitions.

The network $g$ with edges
$$
\begin{array}{l}
M(g) = \{(1, 2), (2, 1), (1, 3), (3, 1), (1, 4), (4, 1), (1, 5), (5, 1), \\
(2, 3), (3, 2), (3, 4), (4, 3), (3, 5), (5, 3), (4, 5), (5, 4)\}
\end{array}
$$
is stable. However this network has coalition $S = (1, 3, 4)$ for which $D(S) = -1$. If  any link in this coalition is broken, then the coalition $S$ disappears from the network but one of the coalitions $(1, 2, 3), (1, 4, 5), (3, 4, 5)$ disappears as well. These coalitions produce payoff $D(S) = 2$ greater than $-D(1, 3, 4)$ and thus the network is stable.

\section{Equilibrium and compromise solution computing}

Let's take Nash equilibrium and compromise solution \cite{Malafeev} as optimality concepts  in the game $G$.

To find equilibria and compromise solutions, it is convenient to construct payoff matrix in the game $G$. Rows of this matrix are action profiles in $G$, columns are the players.

To find equilibria, consider each network $g$ and check whether it is stable. The checking procedure for agreement game is as follows: for each player $i$ deviation from action profile $\varphi$ that breaks some links, check whether the payoff $H_i$ in this action profile is greater than $H_i(g)$.

Compromise solution can be found by simple algorithm as follows:
\begin{enumerate}
	\item Calculate the ideal vector $M=(M_1, \dots, M_n)$, where $M_i = \max_{\varphi}(H_i(\varphi))$.
	\item For each action profile $\varphi \in \Phi$ for each player $i \in N$ calculate differences $M_i - H_i(\varphi), \ i \in N$.
	\item For each action profile $\varphi \in \Phi$ calculate maximal difference $\max_i(M_i - H_i(\varphi))$.
	\item Find minimum of maximal differences over the set $\Phi$ of all action profiles
	$$
	\min_{\varphi \in \Phi}\max_i(M_i - H_i(\varphi)).
	$$
\end{enumerate}

Action profile where minimum is achieved is a compromise solution of game $G$.

\section{The example}

A model of 2-player and 3-player interaction with 5 players in a network is considered. Lets denote them $1, 2, 3, 4, 5$. Assume possible action profiles are $\{\varphi_s\}_{s=1}^{10}$. Each action profile is determined by 2 matrices $\Gamma^+$ and $\Gamma^-$.

For the action profile $\varphi_1$ matrices $\Gamma^+, \Gamma^-$ and the matrix of the network $g = \min(\Gamma^+, {\Gamma^-}^T)$ are as follows
$$
\Gamma^+_1 = \left(\begin{array}{ccccc}
0& 1& 1& 1& 1\\
0& 0& 0& 0& 0\\
0& 1& 0& 1& 0\\
0& 1& 0& 0& 0\\
0& 1& 0& 1& 0
\end{array}\right), \ \Gamma^-_1 = \left(\begin{array}{ccccc}
0& 0& 0& 0& 0\\
0& 0& 0& 0& 1\\
1& 0& 0& 0& 0\\
1& 0& 1& 0& 1\\
1& 0& 0& 0& 0
\end{array}\right), \ g_1 = \left(\begin{array}{ccccc}
0& 0& 1& 1& 1\\
0& 0& 0& 0& 0\\
0& 0& 0& 1& 0\\
0& 0& 0& 0& 0\\
0& 1& 0& 1& 0
\end{array}\right).
$$

Similarly for the action profiles $\varphi_2, \dots, \varphi_{10}$ matrices $\Gamma^+, \Gamma^-$ and matrices of the networks $g = \min(\Gamma^+, {\Gamma^-}^T)$ are as follows
$$
\Gamma^+_2 = \left(\begin{array}{ccccc}
0& 0& 1& 1& 1\\
1& 0& 0& 0& 0\\
0& 1& 0& 1& 1\\
0& 1& 0& 0& 1\\
0& 1& 0& 0& 0
\end{array}\right), \ \Gamma^-_2 = \left(\begin{array}{ccccc}0& 0& 0& 0& 0\\
0& 0& 1& 1& 1\\
1& 0& 0& 0& 0\\
1& 0& 1& 0& 0\\
1& 0& 1& 0& 0
\end{array}\right), \ g_2 = \left(\begin{array}{ccccc}
0& 0& 1& 1& 1\\
0& 0& 0& 0& 0\\
0& 1& 0& 1& 1\\
0& 1& 0& 0& 0\\
0& 1& 0& 0& 0
\end{array}\right).
$$
$$
\Gamma^+_3 = \left(\begin{array}{ccccc}
0& 0& 1& 0& 0\\
1& 0& 0& 0& 0\\
0& 1& 0& 0& 0\\
1& 1& 0& 0& 0\\
0& 1& 0& 1& 0
\end{array}\right), \ \Gamma^-_3 = \left(\begin{array}{ccccc}
0& 1& 0& 0& 0\\
0& 0& 1& 1& 1\\
1& 0& 0& 0& 0\\
0& 0& 1& 0& 1\\
1& 0& 1& 0& 0
\end{array}\right), \ g_3 = \left(\begin{array}{ccccc}
0& 0& 1& 0& 0\\
1& 0& 0& 0& 0\\
0& 1& 0& 0& 0\\
0& 1& 0& 0& 0\\
0& 1& 0& 1& 0
\end{array}\right).
$$
$$
\Gamma^+_4 = \left(\begin{array}{ccccc}
0& 0& 0& 1& 1\\
1& 0& 0& 1& 0\\
1& 1& 0& 1& 1\\
0& 0& 0& 0& 1\\
0& 1& 0& 0& 0
\end{array}\right), \ \Gamma^-_4 = \left(\begin{array}{ccccc}
0& 0& 0& 0& 0\\
0& 0& 1& 0& 1\\
0& 0& 0& 0& 0\\
1& 1& 1& 0& 0\\
1& 0& 0& 1& 0
\end{array}\right), \ g_4 = \left(\begin{array}{ccccc}
0& 0& 0& 1& 1\\
0& 0& 0& 1& 0\\
0& 1& 0& 1& 0\\
0& 0& 0& 0& 1\\
0& 1& 0& 0& 0
\end{array}\right).
$$
$$
\Gamma^+_5 = \left(\begin{array}{ccccc}
0& 1& 1& 1& 1\\
0& 0& 0& 0& 0\\
0& 1& 0& 1& 1\\
0& 1& 0& 0& 1\\
0& 1& 0& 0& 0
\end{array}\right), \ \Gamma^-_5 = \left(\begin{array}{ccccc}
0& 0& 0& 0& 0\\
1& 0& 1& 1& 1\\
1& 0& 0& 0& 0\\
1& 0& 1& 0& 0\\
1& 0& 1& 1& 0
\end{array}\right), \ g_5 = \left(\begin{array}{ccccc}
0& 1& 1& 1& 1\\
0& 0& 0& 0& 0\\
0& 1& 0& 1& 1\\
0& 1& 0& 0& 1\\
0& 1& 0& 0& 0
\end{array}\right).
$$
$$
\Gamma^+_6 = \left(\begin{array}{ccccc}
0& 1& 0& 1& 0\\
0& 0& 0& 0& 0\\
0& 1& 0& 0& 0\\
0& 1& 1& 0& 0\\
0& 1& 0& 1& 0
\end{array}\right), \ \Gamma^-_6 = \left(\begin{array}{ccccc}
0& 0& 0& 0& 0\\
1& 0& 0& 1& 1\\
0& 0& 0& 0& 0\\
1& 0& 0& 0& 0\\
1& 0& 0& 0& 0
\end{array}\right), \ g_6 = \left(\begin{array}{ccccc}
0& 1& 0& 1& 0\\
0& 0& 0& 0& 0\\
0& 0& 0& 0& 0\\
0& 1& 0& 0& 0\\
0& 1& 0& 0& 0
\end{array}\right).
$$
$$
\Gamma^+_7 = \left(\begin{array}{ccccc}
0& 0& 1& 1& 0\\
0& 0& 0& 1& 1\\
0& 1& 0& 0& 1\\
0& 0& 0& 0& 1\\
0& 0& 0& 0& 0
\end{array}\right), \ \Gamma^-_7 = \left(\begin{array}{ccccc}
0& 0& 0& 0& 1\\
0& 0& 1& 0& 0\\
1& 0& 0& 0& 0\\
0& 1& 0& 0& 0\\
0& 1& 0& 1& 0
\end{array}\right), \  g_7 = \left(\begin{array}{ccccc}
0& 0& 0& 0& 0\\
0& 0& 0& 1& 1\\
0& 1& 0& 0& 0\\
0& 0& 0& 0& 1\\
0& 0& 0& 0& 0
\end{array}\right).
$$
$$
\Gamma^+_8 = \left(\begin{array}{ccccc}
0& 1& 1& 1& 1\\
0& 0& 1& 0& 1\\
0& 0& 0& 0& 0\\
0& 0& 0& 0& 0\\
0& 0& 1& 1& 0
\end{array}\right), \ \Gamma^-_8 = \left(\begin{array}{ccccc}
0& 0& 0& 0& 0\\
0& 0& 0& 0& 0\\
0& 1& 0& 0& 1\\
1& 0& 0& 0& 1\\
1& 1& 0& 0& 0
\end{array}\right), \ g_8 = \left(\begin{array}{ccccc}
0& 0& 0& 1& 1\\
0& 0& 1& 0& 1\\
0& 0& 0& 0& 0\\
0& 0& 0& 0& 0\\
0& 0& 1& 1& 0
\end{array}\right).
$$
$$
\Gamma^+_9 = \left(\begin{array}{ccccc}
0& 0& 0& 0& 0\\
0& 0& 0& 0& 1\\
1& 1& 0& 0& 0\\
1& 1& 1& 0& 1\\
1& 0& 0& 0& 0
\end{array}\right), \Gamma^-_9 = \left(\begin{array}{ccccc}
0& 0& 0& 0& 0\\
0& 0& 1& 1& 0\\
0& 0& 0& 0& 0\\
0& 0& 0& 0& 0\\
0& 1& 0& 1& 0
\end{array}\right), \ g_9 = \left(\begin{array}{ccccc}
0& 0& 0& 0& 0\\
0& 0& 0& 0& 1\\
0& 1& 0& 0& 0\\
0& 1& 0& 0& 1\\
0& 0& 0& 0& 0
\end{array}\right).
$$
$$
\Gamma^+_{10} = \left(\begin{array}{ccccc}
0& 0& 0& 0& 0\\
0& 0& 0& 1& 1\\
0& 1& 1& 1& 0\\
1& 0& 0& 0& 1\\
0& 1& 0& 0& 0
\end{array}\right), \ \Gamma^-_{10} = \left(\begin{array}{ccccc}
0& 0& 0& 0& 0\\
0& 0& 1& 0& 1\\
0& 0& 0& 0& 0\\
0& 1& 1& 0& 0\\
0& 0& 1& 1& 0
\end{array}\right), \ g_{10} = \left(\begin{array}{ccccc}
0& 0& 0& 0& 0\\
0& 0& 0& 1& 0\\
0& 1& 0& 1& 0\\
0& 0& 0& 0& 1\\
0& 1& 0& 0& 0
\end{array}\right).
$$

Income functions and weights of the players ${1, 2, 3, 4, 5}$ for each 3-players coalition are as follows
$$
\begin{array}{l}
S^1 = (1, 3, 4), \ D(S^1) = 4, \ \alpha_{1}={\frac 12}, \ \alpha_{3}={\frac 14}, \ \alpha_{4}={\frac 14}\\
S^2 = (2, 5, 4), \ D(S^2) = 3, \ \alpha_{2}={\frac 13}, \ \alpha_{5}={\frac 13}, \ \alpha_{4}={\frac 13}\\
S^3 = (1, 4, 5), \ D(S^3) = 6, \ \alpha_{1}={\frac 13}, \ \alpha_{4}={\frac 13}, \ \alpha_{5}={\frac 13}\\
S^4 = (1, 3, 5), \ D(S^4) = 8, \ \alpha_{1}={\frac 12}, \ \alpha_{3}={\frac 14}, \ \alpha_{5}={\frac 14}\\
S^5 = (3, 4, 2), \ D(S^5) = 4, \ \alpha_{3}={\frac 14}, \ \alpha_{4}={\frac 14}, \ \alpha_{2}={\frac 12}\\
S^6 = (1, 2, 5), \ D(S^6) = 12, \ \alpha_{1}={\frac 24}, \ \alpha_{2}={\frac 14}, \ \alpha_{5}={\frac 14}\\
S^7 = (1, 4, 2), \ D(S^7) = 8, \ \alpha_{1}={\frac 14}, \ \alpha_{4}={\frac 12}, \ \alpha_{2}={\frac  12}\\
S^8 = (3, 4, 5), \ D(S^8) = 18, \ \alpha_{3}={\frac 13}, \ \alpha_{4}={\frac 13}, \ \alpha_{5}={\frac 13}\\
S^9 = (3, 5, 2), \ D(S^9) = 16, \ \alpha_{3}={\frac 12}, \ \alpha_{5}={\frac 14}, \ \alpha_{2}={\frac  14}\\
S^{10} = (3, 2, 1), \ D(S^{10}) = 21, \ \alpha_{3}={\frac 13}, \ \alpha_{2}={\frac 13}, \ \alpha_{1}={\frac  13}
\end{array}
$$
where $S^i$ is a coalition numbered by index $i$ and $D(S^i)$ is its profit.

For 2-players coalitions we get
$$
\begin{array}{l}
S^{11} = (1, 3), \ D(S^{11}) = -2, \ \alpha_{1} = \alpha_{3} = {\frac 12}\\
S^{12} = (1, 4), \ D(S^{12}) = -6, \ \alpha_{1} = \alpha_{4} = {\frac 12}
\end{array}
$$
and $D = 0$ for other 2-players coalitions.

Payoffs for action profiles ${\{\varphi_s\}_{s=1}^{10}}$ are as follows
$$
\begin{array}{l}
{H_1(\varphi_{1})= 0},  {H_2(\varphi_{1})=1}, {H_3(\varphi_{1})=0},
{H_4(\varphi_{1})=1}, {H_5(\varphi_{1})= 3};\\

{H_1(\varphi_{2})=2}, {H_2(\varphi_{2})=2},
{H_3(\varphi_{2})= 3}, {H_4(\varphi_{2})= -1}, {H_5(\varphi_{2})= 2};\\

{H_1(\varphi_{3})= 6}, {H_2(\varphi_{3})= 6}, {H_3(\varphi_{3})= 2}, {H_4(\varphi_{3})= 2}, {H_5(\varphi_{3})=6};\\

{H_1(\varphi_{4})=-1}, {H_2(\varphi_{4})= 3}, {H_3(\varphi_{4})= 1}, {H_4(\varphi_{4})= 1}, {H_5(\varphi_{4})= 3};\\

{H_1(\varphi_{5})= 19}, {H_2(\varphi_{5})!}, {H_3(\varphi_{5})= 25}, {H_4(\varphi_{5})= 12}, {H_5(\varphi_{5})= 22};\\

{H_1(\varphi_{6})= 5}, {H_2(\varphi_{6})= 9}, {H_3(\varphi_{6})= 1},
{H_4(\varphi_{6})= 2 }, {H_5(\varphi_{6})= 3};\\

{H_1(\varphi_{7})= 0}, {H_2(\varphi_{7})= 1},
{H_3(\varphi_{7})= 0}, {H_4(\varphi_{7})= 1 }, {H_5(\varphi_{7})= 1};\\

{H_1(\varphi_{8})= -1}, {H_2(\varphi_{8})= 4}, {H_3(\varphi_{8})= 8},
{H_4(\varphi_{8})= -1 }, {H_5(\varphi_{8})= 6};\\

{H_1(\varphi_{9})= 2}, {H_2(\varphi_{9})= 1}, {H_3(\varphi_{9})= 1},
{H_4(\varphi_{9})= 2}, {H_5(\varphi_{9})= 1};\\

{H_1(\varphi_{10})= 0}, {H_2(\varphi_{10})= 3}, {H_3(\varphi_{10})= 7},
{H_4(\varphi_{10})= 8}, {H_5(\varphi_{10})= 7}.
\end{array}
$$

Thus payoff matrix for players $1, 2, 3, 4, 5$ for action profiles $s = 1, \ldots, 10$ is
$$
\left(\begin{array}{ccccc}
4& 1& 1& 4& 3\\
6& 2& 4& 2& 2\\
7& 6& 3& 2& 6\\
2& 3& 1& 4& 3\\
23& 21& 26& 15& 22\\
8& 9& 1& 5& 3\\
0& 1& 0& 1& 1\\
2& 4& 8& 2& 6\\
2& 1& 1& 2& 1\\
0& 3& 7& 8& 7
\end{array}\right)
$$


There is only one action profile that can be changed to another action profile by 1-player action change. It is the profile $\varphi_4$ that can be changed to $\varphi_{10}$ by player 1 breaking his links with players 4 and 5. Payoff of the player 1 for the profile $\varphi_4$ is -1 and for the profile $\varphi_{10}$ is 0. Thus the profile $\varphi_4$ is not a Nash equilibrium. Other profiles are Nash equilibria. 

The ideal vector is as follows $ M = ( 23, 21, 26, 15,22)$.

For each action profile $\varphi_s \in \Phi_G$ for each player $i \in N$ let's calculate differences $M_i - H_i(\varphi_s), \ i \in N$. These differences (sorted in ascending order) for each action profile $\varphi_s$ are:
$$
\begin{array}{l}
M_i - H_i(\varphi_{1})= \triangle_{1}=(11, 19, 19, 20, 25)\\
M_i - H_i(\varphi_{2})= \triangle_{2}=(13, 17,19, 20, 22)\\
M_i - H_i(\varphi_{3})= \triangle_{3}=(13, 15,16, 16, 23)\\
M_i - H_i(\varphi_{4})= \triangle_{4}=(11, 18, 19, 21, 25)\\
M_i - H_i(\varphi_{5})= \triangle_{5}=(0, 0, 0, 0, 0)\\
M_i - H_i(\varphi_{6})= \triangle_{6}=(10, 12, 15, 19, 25)\\
M_i - H_i(\varphi_{7})= \triangle_{7}=(14, 20, 21, 23, 26)\\
M_i - H_i(\varphi_{8})= \triangle_{8}=(13, 16, 18, 21, 27)\\
M_i - H_i(\varphi_{9})= \triangle_{9}=(13, 20, 21, 21, 25)\\
M_i - H_i(\varphi_{10})= \triangle_{10}=(7, 15, 18, 19, 23).
\end{array}
$$

Minimum of maximal differences over action profiles
$$
\min_{ \varphi_G \in \Phi_G } \max_{i \in N}(M_i - H_i(\varphi_s))
$$
is a value of a compromise solution. For this example the compromise solution is an action profile $\varphi_{5}$.

\section{Conclusion}

Network formation games with additive payoff functions determined by subgraphs are considered. For the case where subgraphs do not pairwise intersect, conditions for strategy profile to be equilibrium are formulated. They are also conditions for network to be stable. A network is stable iff for any coalition with negative payoff the network does not consist some coalition links in this network. 

In the case where some subgraphs intersect, this condition is only sufficient. In this case some stable networks one can find, but not all stable networks can be found in this case. 

In the case where subgraphs are determined by 3-player coalitions the algorithm finding equilibria and stable networks is proposed. The example with 5 players is considered. For this example equilibria and compromise solution are found.

For further research the case of additive payoffs determined by 3-player coalitions is interesting. It is the unsolved problem to find necessary and sufficient conditions for network to be stable in this case.
\newline
	\section{Acknowledgements}
	The work is partly supported by work RFBR No. 18-01-00796.

\end{document}